\newcommand*{\QEDA}{\hfill\hbox{\vrule width1.0ex height1.0ex}}
\newtheorem{thm}{Theorem}[section]
\newtheorem{theorem}[thm]{Theorem}
\newtheorem{lemma}[thm]{Lemma}
\newtheorem{proposition}[thm]{Proposition}
\newcommand{\beq}{\begin{equation}}
\newcommand{\eeq}{\end{equation}}
\newcommand{\beqa}{\begin{eqnarray}}
\newcommand{\eeqa}{\end{eqnarray}}
\newcommand{\beqas}{\begin{eqnarray*}}
\newcommand{\eeqas}{\end{eqnarray*}}
\newcommand{\bi}{\begin{itemize}}
\newcommand{\ei}{\end{itemize}}
\newcommand{\R}{\mathbb{R}}
\newcommand{\lam}{{\lambda}}
\newcommand{\inner}[2]{\langle #1,#2\rangle}
\newcommand{\argmin}{\mathrm{argmin}\,}
\newcommand{\dom}{\mathrm{dom}\,}
\newcommand{\Argmin}{\mathrm{Argmin}\,}
\newcommand{\tx}{\tilde x}
\newcommand{\ty}{\tilde y}
\newcommand{\tz}{\tilde z}
\begin{document}
	\title{Unifying restart accelerated gradient and proximal bundle methods}
	\date{January 7, 2025}
	\author{
		Jiaming Liang \thanks{Goergen Institute for Data Science and Department of Computer Science, University of Rochester, Rochester, NY 14620 (email: {\tt jiaming.liang@rochester.edu}).
		}
		 }
	\maketitle
	
	\begin{abstract}
		This paper presents a novel restarted version of Nesterov's accelerated gradient method and establishes its optimal iteration-complexity for solving convex smooth composite optimization problems. The proposed restart accelerated gradient method is shown to be a specific instance of the accelerated inexact proximal point framework introduced in \cite{MonteiroSvaiterAcceleration}. Furthermore, this work examines the proximal bundle method within the inexact proximal point framework, demonstrating that it is an instance of the framework. Notably, this paper provides new insights into the underlying algorithmic principle that unifies two seemingly disparate optimization methods, namely, the restart accelerated gradient and the proximal bundle methods.
		\\
		
		{\bf Key words.} convex composite optimization, accelerated gradient method, proximal bundle method, proximal point method, optimal iteration-complexity
		\\
		
		{\bf AMS subject classifications.} 
		49M37, 65K05, 68Q25, 90C25, 90C30, 90C60
	\end{abstract}
	
	\section{Introduction}\label{sec:intro}

    Nesterov's accelerated gradient methods \cite{beck2009fast,ag_nesterov83,nesterov2004,nesterov2018lectures} have been widely employed to solve convex smooth composite optimization (CSCO) problems of the form
	\begin{equation}\label{eq:ProbIntro}
	\phi_*:=\min_{x \in \R^n} \left\{\phi(x):=f(x)+h(x)\right\},
	\end{equation}
    where $f$ is typically a convex and $L$-smooth function, and $h$ is a convex and possibly nonsmooth function with a simple proximal mapping, satisfying $\dom h \subset \dom f$.
    Extensive research has been dedicated to developing a theoretical understanding of these methods \cite{ahn2022understanding,bubeck2015geometric,krichene2015accelerated,lessard2016analysis,su2016differential,wibisono2016variational}, extending their scopes \cite{carmon2018accelerated,nonconv_lan16,lan2011primal,jliang2018double,liang2021average,liang2023average,liang2021fista}, and improving their practical performance \cite{beck2009fast,lin2015universal,monteiro2016adaptive,nesterov2015universal,sujanani2024efficient}.
    
    Among many approaches to enhance the convergence of accelerated gradient methods in practice, in particular to suppress the oscillating behavior, the restart technique has shown remarkable improvement in the context of CSCO \cite{alamo2019gradient,alamo2019restart,alamo2022restart,fercoq2019adaptive,giselsson2014monotonicity,monteiro2016adaptive,necoara2019linear,o2015adaptive,su2016differential,sujanani2024efficient}.
    The most natural restart scheme is to restart the accelerated gradient method after a fixed number of iterations, and an optimal fixed restart scheme is presented in \cite{necoara2019linear}. Various adaptive restart schemes have also been explored in the literature.
    Paper \cite{o2015adaptive} proposes a function restart scheme (i.e., it restarts when the function value increases) and a gradient restart scheme (i.e., it restarts when the momentum term and the negative gradient make an obtuse angle).
    However, this paper only provides a heuristic discussion but no non-asymptotic convergence rate.
    Inspired by an ODE interpretation of Nesterov’s accelerated gradient method, \cite{su2016differential} develops speed restart schemes in both continuous and discrete times. In discrete time, the scheme restarts when $\|x_k-x_{k-1}\|<\|x_{k-1}-x_{k-2}\|$.
    The paper presents a convergence analysis in continuous time, while some constants are just shown to exist.
    Based on a restart condition for estimating the strong convexity parameter, \cite{sujanani2024efficient} proposes parameter-free restarted accelerated gradient methods for strongly convex optimization.
 
    Leveraging the A-HPE framework proposed in \cite{MonteiroSvaiterAcceleration}, we develop a novel restart  accelerated composite gradient (ACG) method and establish the same optimal iteration-complexity as the accelerated gradient method for CSCO problems \eqref{eq:ProbIntro}. A-HPE is a generic framework built on an acceleration scheme. Two specific implementations of A-HPE are given in \cite{MonteiroSvaiterAcceleration}, namely, FISTA \cite{beck2009fast} (a variant of  accelerated gradient method) and an accelerated Newton proximal extragradien method, which is the first optimal second-order method.
    Our main contribution is to show restart ACG is another instance of A-HPE and establish the optimal complexity bound of restart ACG.

    Another contribution of the paper is that it demonstrates the modern proximal bundle (MPB) method \cite{liang2021proximal,liang2024unified}, an optimal method for solving convex nonsmooth composite optimization (CNCO) problems, is indeed an instance of the HPE framework \cite{gonccalves2017improved}, which can be understood as the non-accelerated counterpart of A-HPE for CNCO. As a result, MPB can be viewed as a restarted version of the cutting-plane method. 
    Building upon the novel perspectives of restart ACG and MPB as multi-step implementations of A-HPE and HPE, respectively, this paper fconcludes by offering a qualitative analysis to elucidate the superior practical performance of restart ACG and MPB in comparison to their corresponding single-step counterparts, FISTA and the subgradient method.

\subsection{Basic definitions and notation} \label{subsec:DefNot}
    
    
    A proper function $f: \R^n\rightarrow (-\infty,+\infty]$ is $\mu$-strongly convex for some $\mu > 0$ if for every $x, y \in \dom f$ and $\lam \in [0,1]$,
    \[
    f(\lam x+(1-\lam) y)\le \lam f(x)+(1-\lam)f(y) - \frac{\lam(1-\lam) \mu}{2}\|x-y\|^2.
    \]
	For $\varepsilon \ge 0$, the \emph{$\varepsilon$-subdifferential} of $ f $ at $x \in \dom f$ is denoted by
	\[
	    \partial_\varepsilon f (x):=\left\{ s \in\R^n: f(y)\geq f(x)+\left\langle s,y-x\right\rangle -\varepsilon, \forall y\in\R^n\right\}.
	\]
	We denote the subdifferential of $f$ at $x \in \dom f$ by $\partial f (x)$, which is the set  $\partial_0 f(x)$ by definition.
    For a given subgradient
$f'(x) \in \partial f(x)$, we denote the linearization of convex function $f$ at $x$ by $\ell_f(\cdot;x)$, which is defined as
\[
\ell_f(\cdot;x):=f(x)+\inner{ f'(x)}{\cdot-x\rangle}.
\]
	
	
	
	
	

	\section{Restart ACG}	

This section first reviews an ACG variant used in the paper, then presents the restart ACG method, and finally provides the complexity analysis of restart ACG and shows that it is optimal for CSCO.

\subsection{Review of an ACG variant}\label{subsec:ACG}

	In this subsection, we consider
	\begin{equation}\label{eq:prob}
	    \min \{\psi(x):=g(x)+h(x):x\in \R^n \},
	\end{equation}
	where $ g $ is $ \mu $-strongly convex and $ (L+\mu) $-smooth, and $ h $ is as in \eqref{eq:ProbIntro}.
We describe an ACG variant tailored to \eqref{eq:prob} and present some basic results regarding the ACG variant.

\begin{algorithm}[H]
\caption{Accelerated Composite Gradient}\label{alg:ACG}
\begin{algorithmic}
\REQUIRE given initial point $ x_0\in \R^n $, set $A_0=0$, $\tau_0=1/L$, and $ y_0=x_0$

\FOR{$j=0,1,\cdots$}

\STATE {\bf 1.} Compute
		\begin{equation}\label{def:tx}
		    \tau_{j+1}=\tau_j + \frac{\mu a_j}{L}, \quad a_j=\frac{\tau_j + \sqrt{\tau_j^2+4\tau_j A_j}}{2}, \quad A_{j+1} = A_j + a_j, \quad \tx_j=\frac{A_j}{A_{j+1}}y_j + \frac{a_j}{A_{j+1}}x_j;
		\end{equation}

\STATE {\bf 2.} Compute  
		\begin{align}
		    \ty_{j+1} &=\underset{u\in \R^n}\argmin\left\lbrace \ell_g(u;\tx_j) + h(u) + \frac{L+\mu}{2}\|u-\tx_j\|^2\right\rbrace, \label{def:tyj} \\
            y_{j+1} &\in \Argmin\left\lbrace \psi(u): u\in \{y_j, \ty_{j+1}\}\right\rbrace, \label{def:yj} \\
            x_{j+1} &=\frac{(L+\mu)a_j\ty_{j+1} - \frac{A_ja_jL}{A_{j+1}} y_j}{A_{j+1}\mu + 1}. \label{def:xj}
		\end{align}	
\ENDFOR

\end{algorithmic} 
\end{algorithm}

The following three lemmas are standard results for ACG. Therefore, we omit their proofs in this subsection but provide them in the Appendix for completeness.
	
	\begin{lemma}\label{lem:101}
		The following statements hold for every $ j\ge 0 $:
		\begin{itemize}
			\item[(a)] $ \tau_j=(1+\mu A_j)/L $;
			\item[(b)] $ A_{j+1}\tau_j=a_j^2 $.
		\end{itemize}
	\end{lemma}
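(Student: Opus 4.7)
The plan is to prove (a) by a straightforward induction on $j$ and to prove (b) by recognizing that the defining formula for $a_j$ is the quadratic formula applied to a specific monic polynomial.

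For part (a), I would start with the base case $j=0$. Since the initialization sets $A_0=0$ and $\tau_0 = 1/L$, the identity $\tau_0 = (1+\mu A_0)/L = 1/L$ holds trivially. For the inductive step, I would assume $\tau_j = (1+\mu A_j)/L$ and use the recursions $\tau_{j+1} = \tau_j + \mu a_j/L$ and $A_{j+1} = A_j + a_j$ from \eqref{def:tx} to compute
\[
\tau_{j+1} \;=\; \frac{1+\mu A_j}{L} + \frac{\mu a_j}{L} \;=\; \frac{1+\mu(A_j+a_j)}{L} \;=\; \frac{1+\mu A_{j+1}}{L},
\]
which closes the induction.

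For part (b), I would simply observe that the expression $a_j = (\tau_j + \sqrt{\tau_j^2 + 4\tau_j A_j})/2$ in \eqref{def:tx} is precisely the positive root of the quadratic equation $a_j^2 - \tau_j a_j - \tau_j A_j = 0$. Rearranging this and using $A_{j+1}=A_j+a_j$ yields
\[
a_j^2 \;=\; \tau_j a_j + \tau_j A_j \;=\; \tau_j(A_j + a_j) \;=\; \tau_j A_{j+1},
\]
which is exactly (b). Neither part presents any real obstacle; the only subtlety worth flagging is that (a) must be established by induction before it could be invoked elsewhere, whereas (b) follows from the definition of $a_j$ alone and does not require (a) or any inductive hypothesis.
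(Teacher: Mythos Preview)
Your proof is correct and essentially identical to the paper's: both establish (a) from the base case $\tau_0=1/L$, $A_0=0$ and the recursions in \eqref{def:tx}, and both obtain (b) by noting that $a_j$ is the root of $a_j^2-\tau_j a_j-\tau_j A_j=0$ and invoking $A_{j+1}=A_j+a_j$. Your write-up merely spells out the induction for (a) more explicitly.
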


	\begin{lemma}\label{lem:gamma}
		Define $ \Gamma_0\equiv0 $ and
		\begin{align}
		& \tilde \gamma_j(\cdot):=\ell_g(\cdot;\tx_j) + h(\cdot) + \frac{\mu}{2}\|\cdot-\tx_j\|^2, \label{def:tgamma} \\
		& \gamma_j(\cdot):=\tilde \gamma_j(\ty_{j+1}) + L\inner{\tx_j-\ty_{j+1}}{\cdot-\ty_{j+1}}+\frac{\mu}{2}\|\cdot-\ty_{j+1}\|^2,  \label{def:gamma}\\
		& \Gamma_{j+1}(\cdot):=\frac{A_j \Gamma_j(\cdot) + a_j \gamma_j(\cdot)}{A_{j+1}}. \label{def:Gamma}
		\end{align}        
		Then, the following statements hold for every $j \ge 0$:
		\begin{itemize}
			\item[(a)] $\gamma_j\le \tilde \gamma_j \le \psi$, $\tilde{\gamma}_j(\ty_{j+1}) = \gamma_j(\ty_{j+1})$,
			\begin{equation}\label{eq:equal}
			\underset{u\in \R^n}\min \left\{ \tilde{\gamma}_j(u) + \frac{L}{2}\| u - \tilde{x}_j \|^2 \right\}  = 
			\underset{u\in \R^n}\min \left\{ \gamma_j(u) + \frac{L}{2}\| u - \tilde{x}_j \|^2 \right\},
			\end{equation}
			and these minimization problems have $\ty_{j+1}$ as a unique optimal solution; 
			\item[(b)] $ \gamma_j $ and $ \Gamma_j $ are $ \mu $-strongly convex quadratic functions; 
			\item[(c)] $x_j 
				=\underset{u\in \R^n}\argmin\left\lbrace A_j\Gamma_j(u)+ \|u-x_0\|^2/2 \right\rbrace$.
		\end{itemize}
	\end{lemma}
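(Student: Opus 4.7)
The plan is to tackle the three parts in order, since (b) feeds into (c) and (a) provides the key subgradient identity. Throughout, the workhorse will be the first-order optimality condition for \eqref{def:tyj}: since $\tilde\gamma_j = \ell_g(\cdot;\tx_j) + h + (\mu/2)\|\cdot-\tx_j\|^2$, the defining minimization of $\ty_{j+1}$ yields $L(\tx_j - \ty_{j+1}) \in \partial \tilde\gamma_j(\ty_{j+1})$.

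For part (a), the upper bound $\tilde\gamma_j \le \psi$ is immediate from $\mu$-strong convexity of $g$: substituting $g(u) \ge \ell_g(u;\tx_j) + (\mu/2)\|u-\tx_j\|^2$ and adding $h(u)$ yields the bound. For $\gamma_j \le \tilde\gamma_j$, combining the above subgradient inclusion with the $\mu$-strong convexity of $\tilde\gamma_j$ produces exactly the right-hand side of \eqref{def:gamma}. The equality at $\ty_{j+1}$ is immediate from the definition, and \eqref{eq:equal} together with the characterization of $\ty_{j+1}$ as the common unique minimizer then follows from the $(L+\mu)$-strong convexity of both objectives plus a one-line first-order check at $u = \ty_{j+1}$ for the $\gamma_j$-side.

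Part (b) is by inspection: $\gamma_j$ is affine plus $(\mu/2)\|\cdot-\ty_{j+1}\|^2$, and the recursion \eqref{def:Gamma} preserves this class by induction. For (c), I would induct on $j$. The base case is trivial since $\Gamma_0 \equiv 0$ forces the argmin to be $x_0$. For the inductive step, the recursion \eqref{def:Gamma} rewrites the objective as
\[
A_{j+1}\Gamma_{j+1}(u) + \tfrac12\|u-x_0\|^2 = \Bigl[A_j\Gamma_j(u) + \tfrac12\|u-x_0\|^2\Bigr] + a_j \gamma_j(u),
\]
and by the inductive hypothesis together with $(A_j\mu+1)$-strong convexity the bracketed term equals a constant plus $\tfrac{A_j\mu+1}{2}\|u - x_j\|^2$. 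Setting the gradient of the full objective to zero and substituting $\tx_j = (A_j/A_{j+1})y_j + (a_j/A_{j+1})x_j$ from \eqref{def:tx} gives an explicit formula for the new minimizer.

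The main obstacle is to verify that the $x_j$-coefficient in that formula vanishes, leaving precisely \eqref{def:xj}. This cancellation hinges on Lemma~\ref{lem:101}: part (a) gives $L\tau_j = 1 + \mu A_j$ and part (b) gives $A_{j+1}\tau_j = a_j^2$, so $A_j\mu + 1 = a_j^2 L / A_{j+1}$, which is exactly the identity needed to kill the $x_j$ term arising from the expansion of $a_j L \tx_j$. Once that cancellation is executed, the remaining $\ty_{j+1}$ and $y_j$ coefficients match \eqref{def:xj} directly, closing the induction.
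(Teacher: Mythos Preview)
Your proposal is correct and follows essentially the same approach as the paper. Parts (a) and (b) match the paper's argument; for (c), the paper starts from the explicit formula \eqref{def:xj} and verifies the optimality condition by deriving a telescoping gradient recursion $A_{j+1}\nabla\Gamma_{j+1}(x_{j+1})+x_{j+1}=A_j\nabla\Gamma_j(x_j)+x_j$, whereas you run the same computation in the opposite direction (write down the first-order condition and solve for the minimizer), but both routes rest on the identical use of Lemma~\ref{lem:101} to produce the cancellation $A_j\mu+1=a_j^2L/A_{j+1}$ that eliminates the $x_j$-term.
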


	\begin{lemma}\label{lem:min}
		For every $j\ge 0$, we have \begin{equation}\label{ineq:induction}
		    A_j \psi(y_j)\le \underset{u\in \R^n}\min\left\lbrace A_j\Gamma_j(u)+\frac12 \|u-x_0\|^2\right\rbrace.
		\end{equation}
	\end{lemma}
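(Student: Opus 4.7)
I would prove \eqref{ineq:induction} by induction on $j$. The base case $j=0$ is immediate since $A_0 = 0$ makes both sides vanish.

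For the inductive step, I set $\phi_j(u) := A_j \Gamma_j(u) + \tfrac{1}{2}\|u-x_0\|^2$, so that $\phi_j$ is $(A_j\mu+1)$-strongly convex with minimizer $x_j$ (by Lemma~\ref{lem:gamma}(b,c)) and $\phi_{j+1}(u) = \phi_j(u) + a_j \gamma_j(u)$ (by \eqref{def:Gamma}). Strong convexity and the induction hypothesis give $\phi_j(u) \ge A_j\psi(y_j) + \tfrac{A_j\mu+1}{2}\|u-x_j\|^2$ for every $u$. A first-order optimality computation for $\phi_{j+1}$, using the identities $L\tau_j = A_j\mu+1$ and $a_j^2 = A_{j+1}\tau_j$ from Lemma~\ref{lem:101}, identifies the minimizer of $\phi_{j+1}$ as exactly the $x_{j+1}$ from \eqref{def:xj}, and therefore
\[
\min_u \phi_{j+1}(u) \ge A_j\psi(y_j) + \tfrac{A_j\mu+1}{2}\|x_{j+1}-x_j\|^2 + a_j \gamma_j(x_{j+1}).
\]

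Next I would bound $A_j\psi(y_j) \ge A_j\gamma_j(y_j)$ using Lemma~\ref{lem:gamma}(a) and expand both $A_j\gamma_j(y_j)$ and $a_j\gamma_j(x_{j+1})$ via the explicit quadratic form \eqref{def:gamma}. The affine parts recombine via the identity $A_j y_j + a_j x_j = A_{j+1}\tilde x_j$ from \eqref{def:tx} into $A_{j+1}L\|\tilde x_j - \tilde y_{j+1}\|^2 + a_j L\inner{\tilde x_j - \tilde y_{j+1}}{x_{j+1}-x_j}$, plus the constant $A_{j+1}\tilde\gamma_j(\tilde y_{j+1})$ and nonnegative quadratic slack coming from the $\mu/2$-terms. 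Applying Young's inequality to the cross term with the weight $a_j/A_{j+1}$ produces penalties $-\tfrac{A_{j+1}L}{2}\|\tilde x_j - \tilde y_{j+1}\|^2$ and $-\tfrac{a_j^2 L}{2A_{j+1}}\|x_{j+1}-x_j\|^2$; the second simplifies to $-\tfrac{A_j\mu+1}{2}\|x_{j+1}-x_j\|^2$ via $a_j^2 L/A_{j+1} = L\tau_j = A_j\mu+1$, so the two penalties exactly annihilate the matching $\tfrac{A_j\mu+1}{2}\|x_{j+1}-x_j\|^2$ term and half of the $A_{j+1}L\|\tilde x_j - \tilde y_{j+1}\|^2$ term.

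After cancellation, the surviving lower bound is $A_{j+1}\tilde\gamma_j(\tilde y_{j+1}) + \tfrac{A_{j+1}L}{2}\|\tilde x_j - \tilde y_{j+1}\|^2$ plus nonnegative slack. The $(L+\mu)$-smoothness of $g$ gives $\psi(\tilde y_{j+1}) \le \tilde\gamma_j(\tilde y_{j+1}) + \tfrac{L}{2}\|\tilde y_{j+1}-\tilde x_j\|^2$, and this penalty is absorbed exactly by the remaining $\tfrac{A_{j+1}L}{2}\|\tilde x_j - \tilde y_{j+1}\|^2$, yielding $\min \phi_{j+1} \ge A_{j+1}\psi(\tilde y_{j+1}) \ge A_{j+1}\psi(y_{j+1})$ by the monotone selection in \eqref{def:yj}. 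The main obstacle I expect is the bookkeeping in the middle step: the Young weight $a_j/A_{j+1}$ is not obvious a priori, and the precise simultaneous cancellation of both penalty terms relies on the tight tuning of the recursions in \eqref{def:tx}, namely $a_j^2 = A_{j+1}\tau_j$ and $L\tau_j = A_j\mu+1$.
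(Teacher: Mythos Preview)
Your proposal is correct and shares the same inductive skeleton as the paper's proof: base case $A_0=0$, strong convexity of $A_j\Gamma_j(\cdot)+\tfrac12\|\cdot-x_0\|^2$ around $x_j$, the bound $\psi(y_j)\ge\gamma_j(y_j)$, and the final $(L+\mu)$-smoothness step followed by \eqref{def:yj}.

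The one substantive difference is in the middle algebraic step. The paper keeps $u$ generic and applies Jensen's inequality for the convex function $\gamma_j$,
\[
A_j\gamma_j(y_j)+a_j\gamma_j(u)\ \ge\ A_{j+1}\gamma_j(\tilde u),\qquad \tilde u:=\frac{A_jy_j+a_ju}{A_{j+1}},
\]
observes that $\|u-x_j\|=(A_{j+1}/a_j)\|\tilde u-\tilde x_j\|$ so that the quadratic term becomes exactly $\tfrac{A_{j+1}L}{2}\|\tilde u-\tilde x_j\|^2$ via Lemma~\ref{lem:101}, and only then minimizes over $\tilde u$, invoking \eqref{eq:equal} to pass from $\gamma_j$ to $\tilde\gamma_j$. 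You instead evaluate at $u=x_{j+1}$ using Lemma~\ref{lem:gamma}(c), expand $\gamma_j$ explicitly from \eqref{def:gamma}, and bound the cross term by Young's inequality with weight $a_j/A_{j+1}$. The paper's Jensen/change-of-variables route is a bit slicker because the correct weight emerges automatically from the convex combination and neither Lemma~\ref{lem:gamma}(c) nor an explicit expansion of $\gamma_j$ is needed; your route is more hands-on but avoids the change of variables and the appeal to \eqref{eq:equal}. The two are equivalent in content, since Jensen on a quadratic is exactly Young plus a nonnegative variance term.
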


    The following lemma is the same as Proposition 1(c) of \cite{monteiro2016adaptive} and hence we omit the proof.
    \begin{lemma}\label{lem:Aj}
		For every $ j\ge 1 $, we have
		\[
		A_j \ge \max \left\{\frac{j^{2}}{4 L}, \frac{1}{L}\left( 1+\frac{\sqrt{\mu}}{2\sqrt{L}}\right) ^{2(j-1)}\right\}.
		\]
	\end{lemma}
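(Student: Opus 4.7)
The plan is to derive a single master recursion relating $A_{j+1}$ and $A_j$ from Lemma \ref{lem:101}, and then extract the two bounds separately by dropping different terms on the right-hand side.

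First I would combine parts (a) and (b) of Lemma \ref{lem:101} with the definition $a_j = A_{j+1} - A_j$ to rewrite $a_j^2 = A_{j+1}\tau_j$ as
\[
L(A_{j+1} - A_j)^2 \;=\; A_{j+1}(1 + \mu A_j).
\]
This identity is the workhorse of the argument. To get the $j^2/(4L)$ bound, I would discard the $\mu A_j$ term, obtaining $\sqrt{L}(A_{j+1}-A_j) \ge \sqrt{A_{j+1}}$. Factoring $A_{j+1} - A_j = (\sqrt{A_{j+1}}-\sqrt{A_j})(\sqrt{A_{j+1}}+\sqrt{A_j})$ and using the crude bound $\sqrt{A_{j+1}}+\sqrt{A_j} \le 2\sqrt{A_{j+1}}$, I would deduce $\sqrt{A_{j+1}}-\sqrt{A_j} \ge 1/(2\sqrt{L})$ for every $j \ge 0$; telescoping from $A_0 = 0$ yields $\sqrt{A_j} \ge j/(2\sqrt{L})$, hence $A_j \ge j^2/(4L)$.

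For the strongly convex bound, I would instead discard the $1$ term to get $L(A_{j+1}-A_j)^2 \ge \mu A_j A_{j+1}$. Taking square roots, dividing by $\sqrt{A_j A_{j+1}}$, and letting $r_j := \sqrt{A_{j+1}/A_j}$ (which is well defined once $A_j > 0$) transforms this into $r_j - 1/r_j \ge \sqrt{\mu/L}$. Solving the corresponding quadratic inequality gives $r_j \ge \tfrac12\left(\sqrt{\mu/L} + \sqrt{\mu/L + 4}\right)$, and the trivial estimate $\sqrt{\mu/L + 4} \ge 2$ yields $r_j \ge 1 + \sqrt{\mu}/(2\sqrt{L})$. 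Telescoping this multiplicative inequality from index $1$ up to $j$ and using the direct computation $A_1 = a_0 = \tau_0 = 1/L$ then delivers $A_j \ge (1/L)\bigl(1+\sqrt{\mu}/(2\sqrt{L})\bigr)^{2(j-1)}$.

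The main obstacle is essentially bookkeeping of the base case: the telescoping for the first bound must start from $A_0 = 0$, while the telescoping of the ratios $r_j$ for the second bound is only valid starting from $j=1$ (since $r_0$ involves division by $\sqrt{A_0}=0$). This is why the exponent in the second bound is $2(j-1)$ rather than $2j$. Apart from this, the argument is a purely algebraic manipulation of the master recursion, and no further structural properties of Algorithm \ref{alg:ACG} are needed.
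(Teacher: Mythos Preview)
Your argument is correct. The master recursion $L(A_{j+1}-A_j)^2 = A_{j+1}(1+\mu A_j)$ follows directly from Lemma~\ref{lem:101}, and both telescoping steps are handled properly, including the base-case subtlety that forces the exponent $2(j-1)$ in the second bound.

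As for comparison: the paper does not actually prove Lemma~\ref{lem:Aj}. It simply cites Proposition~1(c) of \cite{monteiro2016adaptive} and omits the argument. Your derivation is essentially the standard one underlying that cited result, so there is nothing materially different to contrast; you have supplied the proof the paper chose to defer to the literature.
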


\subsection{The algorithm}\label{subsec:algorithm}

Subsection \ref{subsec:ACG} outlines a single-loop ACG method. Designing a restart ACG requires repeatedly invoking Algorithm \ref{alg:ACG} as a subroutine within a double-loop algorithm. This approach aligns naturally with the proximal point method (PPM), which iteratively solves a sequence of proximal subproblems using a recursive subroutine. More precisely, we adopt the A-HPE framework from \cite{MonteiroSvaiterAcceleration} as an inexact PPM. Within each loop of A-HPE, Algorithm~\ref{alg:ACG} is employed to solve a certain proximal subproblem, while between successive loops, an acceleration scheme from A-HPE is applied. Consequently, the proposed restart ACG method (i.e., Algorithm~\ref{alg:restart}) can be described as ``doubly accelerated."

        \begin{algorithm}[H]
\caption{Restart ACG}\label{alg:restart}
\begin{algorithmic}
\REQUIRE given initial point $ w_0\in \dom h $ and stepsize $\lam>0$, set $ z_0=w_0 $ and $ B_0=0 $

\FOR{$k=1,2,\cdots$}

\STATE {\bf 1.} Compute
		\[
		b_{k}=\frac{\lam + \sqrt{\lam^2+4\lam B_{k-1}}}{2}, \quad B_{k} = B_{k-1} + b_{k}, \quad \tilde z_{k}=\frac{B_{k-1}}{B_{k}}w_{k-1} + \frac{b_{k}}{B_{k}}z_{k-1};
		\]

\STATE {\bf 2.} Call Algorithm \ref{alg:ACG} with 
\begin{equation}\label{eq:setup}
    x_0=\tilde z_{k}, \quad \psi= \phi+\frac{1}{2\lam}\|\cdot-\tilde z_{k}\|^2, \quad g=f+\frac{1}{2\lam}\|\cdot-\tilde z_{k}\|^2
\end{equation}
to find a triple $ (\tilde w_{k}, u_{k}, \eta_{k}) $ satisfying
		\begin{align}
		& u_k\in \partial_{ \eta_k} \phi(\tilde w_k),  \label{subdiff-1} \\
		& \| \lam u_k + \tilde w_{k} - \tilde z_{k} \|^2 + 2 \lam \eta_{k} \le 0.9 \|\tilde z_{k}-\tilde w_{k}\|^2; \label{ineq:a-hpe} 
		\end{align}

\STATE {\bf 3.} Compute $z_{k}=z_{k-1}-b_{k} u_{k}$ and $ w_{k}\in \Argmin\left\lbrace \phi(u): u\in \{w_{k-1}, \tilde w_{k}\}\right\rbrace$.
\ENDFOR

\end{algorithmic} 
\end{algorithm}

    From the ``inner loop" perspective, Algorithm~\ref{alg:restart} keeps performing ACG iterations until \eqref{subdiff-1} and \eqref{ineq:a-hpe} are satisfied, and then restarts ACG with the initialization as in \eqref{eq:setup}. 
     From the ``outer loop" perspective, Algorithm~\ref{alg:restart} is an instance of the A-HPE framework of \cite{MonteiroSvaiterAcceleration} for solving \eqref{eq:ProbIntro} with $\lam_k=\lam$ for every $k\ge 1$ and ACG as its subroutine for step 2. The constant $0.9$ in \eqref{ineq:a-hpe} is not critical and can be any arbitrary number within the interval $(0,1)$. With minor modification, such as generalizing $f$ to $\phi$, the results in Section 3 of \cite{MonteiroSvaiterAcceleration} are applicable to this paper. Consequently, Theorem 3.8 of \cite{MonteiroSvaiterAcceleration} also holds. For completeness, we state the theorem below in our context without providing a proof.

    \begin{theorem}\label{thm:outer}
        For every $k\ge 1$, we have
        \[
        \phi(w_k) - \phi_* \le \frac{2d_0^2}{\lam k^2}.
        \]
    \end{theorem}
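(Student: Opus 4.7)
The plan is to adapt the standard A-HPE potential-function argument from \cite{MonteiroSvaiterAcceleration} to the present setting. Fix a minimizer $x_*$ of $\phi$, let $d_0 := \|w_0 - x_*\|$, and introduce the Lyapunov quantity
\[
V_k := B_k\bigl(\phi(w_k) - \phi_*\bigr) + \frac{1}{2}\|z_k - x_*\|^2.
\]
The goal is to prove $V_k \le V_{k-1}$ for every $k \ge 1$; since $B_0 = 0$ and $z_0 = w_0$, this chain gives $V_k \le V_0 = d_0^2/2$, and the stated complexity then follows from a lower bound on $B_k$.

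To establish the one-step inequality, I would invoke the $\eta_k$-subgradient inclusion \eqref{subdiff-1} at $x_*$,
\[
\phi(x_*) \ge \phi(\tilde w_k) + \inner{u_k}{x_* - \tilde w_k} - \eta_k,
\]
multiply by $b_k$, and combine with $B_k \phi(w_k) \le B_{k-1}\phi(w_{k-1}) + b_k\phi(\tilde w_k)$, which follows from $w_k \in \Argmin\{\phi(u):u\in\{w_{k-1},\tilde w_k\}\}$ together with $B_k = B_{k-1}+b_k$. After subtracting $B_k\phi_*$ and expanding $\frac{1}{2}\|z_k-x_*\|^2$ via $z_k = z_{k-1} - b_k u_k$, the variable $x_*$ cancels and one is left with $\inner{u_k}{\tilde w_k - z_{k-1}}$ plus a quadratic term in $u_k$. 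I would then split $b_k(\tilde w_k - z_{k-1}) = B_{k-1}(w_{k-1}-\tilde w_k) + B_k(\tilde w_k - \tilde z_k)$ using the identity $B_k\tilde z_k = B_{k-1} w_{k-1} + b_k z_{k-1}$, bound the $\inner{u_k}{w_{k-1}-\tilde w_k}$ term by a second application of \eqref{subdiff-1} (this time at $w_{k-1}$), and use $b_k^2 = \lam B_k$ (obtained from the quadratic defining $b_k$) to match units. Completing the square in $\lam u_k + \tilde w_k - \tilde z_k$ makes the remaining residual collapse exactly to a non-positive multiple of the left-hand side of \eqref{ineq:a-hpe}, yielding $V_k \le V_{k-1}$.

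Telescoping the one-step relation gives $\phi(w_k) - \phi_* \le d_0^2/(2 B_k)$. For the lower bound on $B_k$, squaring $2b_k - \lam = \sqrt{\lam^2 + 4\lam B_{k-1}}$ delivers $b_k^2 = \lam B_k$, so $B_k - B_{k-1} = \sqrt{\lam B_k}$ and therefore
\[
\sqrt{B_k} - \sqrt{B_{k-1}} = \frac{\sqrt{\lam B_k}}{\sqrt{B_k} + \sqrt{B_{k-1}}} \ge \frac{\sqrt{\lam}}{2},
\]
so $B_k \ge \lam k^2/4$. Substituting into $\phi(w_k) - \phi_* \le d_0^2/(2 B_k)$ yields the claimed bound $2d_0^2/(\lam k^2)$.

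The main obstacle is the bookkeeping of the one-step descent. Every algorithmic choice (the barycentric formula for $\tilde z_k$, the dual update $z_k = z_{k-1} - b_k u_k$, and the scaling $b_k^2 = \lam B_k$) is calibrated so that the accumulated quadratic residual collapses \emph{exactly} into the error combination $\|\lam u_k + \tilde w_k - \tilde z_k\|^2 + 2\lam\eta_k - 0.9\|\tilde z_k - \tilde w_k\|^2$; a single mismatched factor of $\lam$, $b_k$, or $B_k$ in the weighted aggregation of the subgradient inequalities breaks the cancellation. Fortunately this is the same ledger maintained in the proof of Theorem 3.8 of \cite{MonteiroSvaiterAcceleration}, and the transfer to the present composite setting only requires noting that the $\eta_k$-subdifferential inequality for $\phi = f + h$ plays the role that the exact subgradient inequality of $f$ played there.
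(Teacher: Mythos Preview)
Your approach is exactly the one the paper adopts: the paper does not prove Theorem~\ref{thm:outer} at all but simply invokes Theorem~3.8 of \cite{MonteiroSvaiterAcceleration}, and your Lyapunov argument with $V_k = B_k(\phi(w_k)-\phi_*) + \tfrac12\|z_k-x_*\|^2$ together with the lower bound $B_k\ge \lam k^2/4$ is precisely that proof.

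There is, however, one bookkeeping slip in your one-step descent. You first use the convex-combination bound $B_k\phi(w_k)\le B_{k-1}\phi(w_{k-1})+b_k\phi(\tilde w_k)$ and later apply the $\eta_k$-subgradient inequality at $w_{k-1}$ to control $B_{k-1}\inner{u_k}{w_{k-1}-\tilde w_k}$. Doing both leaves the residual $B_{k-1}\bigl[\phi(w_{k-1})-\phi(\tilde w_k)\bigr]$, which has no sign in general, so the ledger does not close as written. The fix is immediate: replace your first step by the sharper $\phi(w_k)\le\phi(\tilde w_k)$ (also guaranteed by step~3 of Algorithm~\ref{alg:restart}), write
\[
B_k\bigl(\phi(w_k)-\phi_*\bigr)-B_{k-1}\bigl(\phi(w_{k-1})-\phi_*\bigr)\le b_k\bigl(\phi(\tilde w_k)-\phi_*\bigr)+B_{k-1}\bigl(\phi(\tilde w_k)-\phi(w_{k-1})\bigr),
\]
and then apply \eqref{subdiff-1} at $x_*$ with weight $b_k$ and at $w_{k-1}$ with weight $B_{k-1}$. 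The two inner products combine via $b_k(\tilde w_k-x_*)+B_{k-1}(\tilde w_k-w_{k-1})+b_k(x_*-z_{k-1})=B_k(\tilde w_k-\tilde z_k)$, the $\eta_k$ terms sum to $B_k\eta_k$, and with $b_k^2=\lam B_k$ the residual collapses exactly to $\tfrac{B_k}{2\lam}\bigl[\|\lam u_k+\tilde w_k-\tilde z_k\|^2+2\lam\eta_k-\|\tilde z_k-\tilde w_k\|^2\bigr]\le 0$ by \eqref{ineq:a-hpe}. Everything else in your outline (the telescoping, the estimate $\sqrt{B_k}-\sqrt{B_{k-1}}\ge\sqrt{\lam}/2$, and the final substitution) is correct.
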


    We next provide some PPM interpretations of conditions \eqref{subdiff-1} and \eqref{ineq:a-hpe}.
    Each call to ACG in step 2 of Algorithm~\ref{alg:restart} approximately solves the proximal subproblem
    \begin{equation}\label{eq:prox-sub}
        \hat z_k = \underset{u\in \R^n}\argmin\left\{\phi(u)+\frac{1}{2\lam}\|u-\tilde z_{k}\|^2\right\}
    \end{equation}
    such that criteria \eqref{subdiff-1} and \eqref{ineq:a-hpe} are satisfied. They can be equivalently written as 
    \begin{align*}
		& \lam u_k+ \tilde w_k - \tz_k \in \partial_{ \lam \eta_k} \left(\lam \phi(\cdot) + \frac{1}{2}\|\cdot-\tz_k\|^2\right) (\tilde w_k)   \\
		& \| \lam u_k + \tilde w_{k} - \tilde z_{k} \|^2 + 2 \lam \eta_{k} \le 0.9 \|\tilde z_{k}-\tilde w_{k}\|^2.
		\end{align*}
    Alternatively, we can derive a relative solution accuracy guarantee on \eqref{eq:prox-sub}. It follows from \eqref{subdiff-1} that for every $u \in \R^n$,
    \[
    \phi(u) \ge \phi(\tilde w_k) + \inner{u_k}{u-\tilde w_k} - \eta_k,
    \]
    which together with \eqref{ineq:a-hpe} implies that
    \begin{align*}
        0.9 \|\tilde z_{k}-\tilde w_{k}\|^2 &\ge \| \lam u_k + \tilde w_{k} - \tilde z_{k} \|^2 + 2 \lam \eta_{k} \\
         &\ge \| \lam u_k + \tilde w_{k} - \tilde z_{k} \|^2 + 2 \lam [\phi(\tilde w_k) + \inner{u_k}{u-\tilde w_k} - \phi(u)].
    \end{align*}
    Taking $u=\hat z_k$, which is the exact solution to \eqref{eq:prox-sub}, we have
    \begin{align*}
        0.9 \|\tilde z_{k}-\tilde w_{k}\|^2 &\ge \|\lam u_k\|^2 + 2\lam \left[\phi(\tilde w_k) + \frac1{2\lam}\|\tilde w_k-\tz_k\|^2 - \phi(\hat z_k) + \inner{u_k}{\hat z_k -\tilde z_k}\right] \\
        &\ge \|\lam u_k + \hat z_k - \tz_k\|^2 + 2\lam \left[\phi(\tilde w_k) + \frac{1}{2\lam}\|\tilde w_k-\tz_k\|^2 - \phi(\hat z_k) - \frac{1}{2\lam}\|\hat z_k-\tz_k\|^2\right].
    \end{align*}
    Therefore,
    \[
    \phi(\tilde w_k) + \frac{1}{2\lam}\|\tilde w_k-\tz_k\|^2 - \phi(\hat z_k) - \frac{1}{2\lam}\|\hat z_k-\tz_k\|^2 \le \frac{0.9}{2\lam} \|\tilde z_{k}-\tilde w_{k}\|^2,
    \]
    indicating that $\tilde w_k$ is an approximate solution to \eqref{eq:prox-sub} with respect to the relative accuracy given above.
    
    \subsection{Complexity analysis}

    This subsection provides the complexity analysis of Algorithm \ref{alg:restart}. It first establishes the iteration-complexity for ACG to find a triple $(\tilde w_{k}, u_{k}, \eta_{k})$ satisfying \eqref{subdiff-1} and \eqref{ineq:a-hpe}.

    To set the stage, recall the initialization in \eqref{eq:setup}, we note that ACG invoked at step 2 of Algorithm~\ref{alg:restart} solves \eqref{eq:prob} with
    \begin{equation}\label{def:psi}
            \psi(\cdot) = \phi(\cdot) + \frac{1}{2\lam}\|\cdot-x_0\|^2, \quad g(\cdot) = f(\cdot) + \frac{1}{2\lam}\|\cdot-x_0\|^2, \quad \mu = \frac{1}{\lam}.
        \end{equation}
        
	\begin{lemma}\label{lem:translate}
    Define 
    \begin{equation}\label{def:phij}
        \phi_j(\cdot) := \Gamma_j(\cdot) - \frac{1}{2\lam}\|\cdot-x_0\|^2,
    \end{equation}
    where $\Gamma_j$ is as in \eqref{def:Gamma}.
    Then, for every $j\ge 1$, we have
		\begin{equation}\label{incl}
			\hat v_j\in \partial_{\varepsilon_j} \phi(y_j), \quad \|\lam \hat v_j + y_j - x_0\|^2 + 2\lam \varepsilon_j = \|\lam v_j\|^2 + 2\lam[\psi(y_j) - \Gamma_j(x_j)]
		\end{equation}
		where
		\begin{equation}\label{def:vj}
			v_j:=\frac{x_0-x_j}{A_j}, \quad \hat v_j:=\frac{x_0-x_j}{A_j} +\frac{x_0-x_j}{\lam}, \quad \varepsilon_j:=\phi(y_j)-\phi_j(x_j)-\inner{\hat v_j}{y_j-x_j}.
		\end{equation}
	\end{lemma}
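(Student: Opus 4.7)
My plan is to exploit the fact that the Moreau envelope-type shift by $\frac{1}{2\lam}\|\cdot-x_0\|^2$ turns the strongly convex estimate functions $\Gamma_j$ of the ACG subroutine into affine lower models of $\phi$. Since ACG is called with $\mu=1/\lam$ and $g=f+\frac{1}{2\lam}\|\cdot-x_0\|^2$, and since $\Gamma_j$ is a $\mu$-strongly convex quadratic by Lemma~\ref{lem:gamma}(b), the function $\phi_j=\Gamma_j-\frac{1}{2\lam}\|\cdot-x_0\|^2$ has a zero Hessian, hence is affine. I would verify this first by showing, via the recursion \eqref{def:Gamma} and the explicit form \eqref{def:gamma}, that each $\Gamma_j$ (for $j\ge 1$) has Hessian exactly $\mu I=\frac{1}{\lam}I$.

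Second, I would identify the gradient of $\phi_j$. From Lemma~\ref{lem:gamma}(c), the first-order condition for $x_j$ gives $A_j\nabla\Gamma_j(x_j)+(x_j-x_0)=0$, hence $\nabla\Gamma_j(x_j)=v_j$. Differentiating $\phi_j$ at $x_j$ yields $\nabla\phi_j(x_j)=v_j-\frac{1}{\lam}(x_j-x_0)=\hat v_j$, and since $\phi_j$ is affine this constant gradient equals $\hat v_j$ everywhere, so $\phi_j(u)=\phi_j(x_j)+\inner{\hat v_j}{u-x_j}$ for all $u$.

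Third, I would prove $\phi_j\le\phi$. Iterating the relation \eqref{def:Gamma} together with $\gamma_j\le\psi$ from Lemma~\ref{lem:gamma}(a) and $\Gamma_0\equiv 0$, $A_0=0$, yields $A_j\Gamma_j\le A_j\psi$, hence $\Gamma_j\le\psi$ for $j\ge 1$. Subtracting $\frac{1}{2\lam}\|\cdot-x_0\|^2$ and using the identification \eqref{def:psi} gives $\phi_j\le\phi$. Combined with the affine form above, this yields for every $u\in\R^n$:
\[
\phi(u)\ge\phi_j(x_j)+\inner{\hat v_j}{u-x_j}=\phi(y_j)+\inner{\hat v_j}{u-y_j}-\varepsilon_j,
\]
where the last identity just unpacks the definition of $\varepsilon_j$ in \eqref{def:vj}. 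This establishes the inclusion $\hat v_j\in\partial_{\varepsilon_j}\phi(y_j)$, and in passing shows $\varepsilon_j\ge 0$.

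Fourth, for the equality, I would substitute $\lam\hat v_j=\lam v_j+(x_0-x_j)$ to rewrite $\lam\hat v_j+y_j-x_0=\lam v_j+y_j-x_j$, then expand
\[
\|\lam\hat v_j+y_j-x_0\|^2=\|\lam v_j\|^2+2\lam\inner{v_j}{y_j-x_j}+\|y_j-x_j\|^2.
\]
Using $\phi(y_j)=\psi(y_j)-\frac{1}{2\lam}\|y_j-x_0\|^2$ and $\phi_j(x_j)=\Gamma_j(x_j)-\frac{1}{2\lam}\|x_j-x_0\|^2$ and expanding $\inner{\hat v_j}{y_j-x_j}$ the same way, the claim reduces to verifying the purely algebraic cancellation
\[
\|y_j-x_j\|^2-\|y_j-x_0\|^2+\|x_j-x_0\|^2-2\inner{x_0-x_j}{y_j-x_j}=0,
\]
which is the cosine rule for the triangle $(x_0,x_j,y_j)$. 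The main obstacle is nothing conceptual but rather keeping this last bookkeeping straight; everything upstream is a clean consequence of the fact that shifting the ACG estimate functions by $\frac{1}{2\lam}\|\cdot-x_0\|^2$ produces an affine minorant of $\phi$.
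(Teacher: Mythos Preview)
Your proof is correct and follows essentially the same route as the paper's: both derive $\hat v_j\in\partial\phi_j(x_j)$ from the optimality condition in Lemma~\ref{lem:gamma}(c), use $\phi_j\le\phi$ to obtain the $\varepsilon$-subdifferential inclusion, and then reduce the identity to the same algebraic cancellation via $\lam\hat v_j=\lam v_j+(x_0-x_j)$. Your explicit remark that $\phi_j$ is affine is a nice clarification (it justifies why $\partial\phi_j$ makes sense and why the linearization is global), but the overall argument is the same.
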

	\begin{proof}
		It follows from the optimality condition of \eqref{def:xj} and the definition of $\phi_j$ in \eqref{def:phij} that
		\[
		v_j=\frac{x_0-x_j}{A_j} \stackrel{\eqref{def:xj}}\in \partial \Gamma_j(x_j) \stackrel{\eqref{def:phij}}= \partial \phi_j(x_j) + \frac{1}{\lam} (x_j-x_0),
		\]
		which together with \eqref{def:vj} implies that $\hat v_j\in \partial \phi_j(x_j)$. 
        This inclusion, the fact that $\phi\ge \phi_j$, and the definition of $\varepsilon_j$ in \eqref{def:vj} yields that for every $u\in \R^n$,
		\[
		\phi(u)\ge \phi_j(u)\ge \phi_j(x_j)+\inner{\hat v_j}{u-x_j}=\phi(y_j)+\inner{\hat v_j}{u-y_j}-\varepsilon_j,
		\]
        and hence that the inclusion in \eqref{incl} holds.
        Noting that $\hat v_j = v_j + (x_0-x_j)/\lam$, we have
		\begin{align*}
			&\|\lam \hat v_j + y_j - x_0\|^2 + 2\lam \varepsilon_j\\
			=& \| \lam v_j + y_j -x_j\|^2 + 2\lam \left[\phi(y_j)-\phi_j(x_j)\right]-2\lam \inner{ v_j}{y_j-x_j} - 2\inner{x_0-x_j}{y_j-x_j} \\
			=&\|\lam v_j\|^2 + 2\lam \left[\phi(y_j) + \frac{1}{2\lam}\|y_j-x_0\|^2 - \phi_j(x_j) - \frac{1}{2\lam}\|x_j-x_0\|^2\right].
		\end{align*}
        Finally, the identity in \eqref{incl} follows from the above one and the definitions of $\psi$ and $\phi_j$ in \eqref{def:psi} and \eqref{def:phij}, respectively.
	\end{proof}
	
	\begin{lemma}\label{lem:tech}
		Define
            \begin{equation}\label{def:hat xj}
	\hat x_j :=\argmin\left\{ \Gamma_j(u):u\in \R^n \right\},
	\end{equation}
    where $\Gamma_j$ is as in \eqref{def:Gamma}.
        Assuming that $ A_j \ge 3\lam $, then the following statements hold for every $ j\ge 1$:
		\begin{itemize}
			\item[a)] 
			\begin{equation}\label{ineq:mj}
			\psi (y_j)- \Gamma_j( \hat x_j) \le  \frac1{2A_j} \|\hat x_j-x_0\|^2 \le \frac{1}{A_j-2\lam}\|y_j-x_0\|^2;
			\end{equation}
			\item[b)]
			\begin{equation}\label{ineq:vj}
			    \|v_j\|\le \frac{3\|y_j-x_0\|}{2A_j}.
			\end{equation}
		\end{itemize}
	\end{lemma}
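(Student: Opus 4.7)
The two bounds will be established in turn, using Lemma \ref{lem:min} (to upper-bound $A_j\psi(y_j)$), the $\mu$-strong convexity of $\Gamma_j$ with $\mu=1/\lam$, and the pointwise minorant property $\Gamma_j \le \psi$ for $j\ge 1$. This minorant property is not recorded explicitly in Lemma \ref{lem:gamma}, but follows by a short induction: $\Gamma_1 = \gamma_0 \le \psi$ by \eqref{def:Gamma} (since $A_0=0$) and Lemma \ref{lem:gamma}(a), and $\Gamma_{j+1}$ is a convex combination of $\Gamma_j$ and $\gamma_j \le \psi$. I would record this observation first.

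For part (a), the first inequality is immediate by evaluating the right-hand side of \eqref{ineq:induction} at $u = \hat x_j$ and dividing by $A_j$. For the second inequality, I would combine it with the strong-convexity chain $\psi(y_j) \ge \Gamma_j(y_j) \ge \Gamma_j(\hat x_j) + \tfrac{1}{2\lam}\|y_j - \hat x_j\|^2$ to get $\|y_j - \hat x_j\|^2 \le (\lam/A_j)\|\hat x_j - x_0\|^2$, then insert this into the elementary estimate $\|\hat x_j - x_0\|^2 \le 2\|\hat x_j - y_j\|^2 + 2\|y_j - x_0\|^2$ and solve for $\|\hat x_j - x_0\|^2$. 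This yields $\|\hat x_j - x_0\|^2 \le \tfrac{2A_j}{A_j - 2\lam}\|y_j - x_0\|^2$; the hypothesis $A_j \ge 3\lam$ enters here only to ensure $A_j - 2\lam > 0$. Dividing by $2A_j$ produces \eqref{ineq:mj}.

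For part (b), I want $\|x_j - x_0\| \le (3/2)\|y_j - x_0\|$, since $v_j = (x_0 - x_j)/A_j$. A naive attempt that routes through $\hat x_j$ via part (a) turns out to be slightly too loose to close at the threshold $A_j = 3\lam$. Instead, I would anchor the triangle inequality at $x_j$ itself by leveraging the strong convexity of the full proximal objective rather than just of $\Gamma_j$. Specifically, the function $F(u) := A_j\Gamma_j(u) + \tfrac{1}{2}\|u-x_0\|^2$ is $(A_j/\lam+1)$-strongly convex and minimized at $x_j$ by Lemma \ref{lem:gamma}(c). Evaluating $F$ at $y_j$ via strong convexity and canceling the $\psi(y_j)$ terms using Lemma \ref{lem:min} (which gives $F(x_j) \ge A_j\psi(y_j)$) together with $\Gamma_j(y_j)\le \psi(y_j)$ leaves $\|y_j - x_j\|^2 \le \tfrac{\lam}{A_j + \lam}\|y_j - x_0\|^2$. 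Then $\|x_j - x_0\| \le \|y_j - x_j\| + \|y_j - x_0\| \le \bigl(1 + \sqrt{\lam/(A_j+\lam)}\bigr)\|y_j - x_0\|$, and $A_j \ge 3\lam$ forces the bracketed quantity to be at most $3/2$. Dividing by $A_j$ yields \eqref{ineq:vj}. The main obstacle is recognizing this need to switch from the strong convexity of $\Gamma_j$ (which would anchor at $\hat x_j$) to that of $F$ (which anchors at $x_j$); this switch is precisely what makes the constants close at the stated threshold.
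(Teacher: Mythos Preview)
Your proposal is correct and follows essentially the same route as the paper: part (a) uses Lemma~\ref{lem:min} at $u=\hat x_j$, then strong convexity of $\Gamma_j$ plus $\Gamma_j\le\psi$ to bound $\|y_j-\hat x_j\|^2$, then the elementary inequality $\|\hat x_j-x_0\|^2\le 2\|\hat x_j-y_j\|^2+2\|y_j-x_0\|^2$; part (b) uses the $(A_j/\lam+1)$-strong convexity of $A_j\Gamma_j(\cdot)+\tfrac12\|\cdot-x_0\|^2$ at its minimizer $x_j$ together with $\Gamma_j(y_j)\le\psi(y_j)$ to get $\|y_j-x_j\|^2\le\tfrac{\lam}{A_j+\lam}\|y_j-x_0\|^2\le\tfrac14\|y_j-x_0\|^2$, then the triangle inequality. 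Your explicit remark that $\Gamma_j\le\psi$ needs a short induction (the paper invokes it as a fact) and your discussion of why anchoring at $x_j$ rather than $\hat x_j$ is needed in (b) are welcome clarifications, but the underlying argument is identical.
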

	\begin{proof}
		a)
		Using Lemma \ref{lem:min}, we have
		\[
		\psi (y_j)
		\stackrel{\eqref{ineq:induction}}\le \underset{u\in \R^n}\min \left \{ \Gamma_j (u) + \frac1{2A_j} \|u-x_0\|^2 \right\} \le  \Gamma_j( \hat x_j) +  \frac1{2A_j} \|\hat x_j-x_0\|^2,
		\]
		and hence the first inequality in \eqref{ineq:mj} holds.
        Since $\Gamma_j$ is as in \eqref{def:Gamma}, it follows from Lemma \ref{lem:gamma}(b) and the fact that $\mu=1/\lam$ (see \eqref{def:psi}) that $\Gamma_j$ is $\lam^{-1}$-strongly convex.
		This observation, the above inequality, and the definition of $ \hat x_j $ in \eqref{def:hat xj} thus imply that for every $u \in \R^n$,
		\[
		\psi (y_j)  - \frac1{2A_j} \|\hat x_j-x_0\|^2 \le \Gamma_j( \hat x_j)  \le \Gamma_j(u)- \frac1{2\lam}\|u-\hat x_j\|^2.
		\]
		Taking $u = y_j$ in the above inequality and using the fact that $ \Gamma_j\le \psi $, we obtain
		\[
		\|y_j-\hat x_j\|^2 \le \frac{\lam}{A_j} \|\hat x_j-x_0\|^2.
		\]
		Using the above inequality, the triangle inequality, and the fact that $(a+b)^2\le 2(a^2+b^2)$, we have
		\[
		\|\hat x_j-x_0\|^2 \le 2(\|\hat x_j-y_j\|^2+\|y_j-x_0\|^2)
		\le \frac{2\lam}{A_j}\|\hat x_j-x_0\|^2 + 2\|y_j-x_0\|^2,
		\]
		and hence the second inequality in \eqref{ineq:mj} follows.
		
		b) 	It follows from Lemma \ref{lem:min} and the fact that $ \Gamma_j $ is $\lam^{-1} $-strongly convex that 
		\begin{align*}
		\psi(y_j)+\frac12 \left( \frac{1}{\lam} + \frac{1}{A_j}\right) \|u- x_j\|^2 &\le \underset{u\in \R^n}\min\left\lbrace  \Gamma_j(u)+ \frac{1}{2A_j} \|u-x_0\|^2\right\rbrace +\frac12 \left( \frac{1}{\lam} + \frac{1}{A_j}\right) \|u- x_j\|^2\\
		&\le  \Gamma_j(u)+ \frac{1}{2A_j} \|u-x_0\|^2.
		\end{align*}
		Taking $ u=y_j $ in the above inequality and using the fact $ \Gamma_j\le \psi $, we have
		\[
		\frac12 \left( \frac{1}{\lam} + \frac{1}{A_j}\right) \|y_j- x_j\|^2 \le  \Gamma_j(y_j) - \psi(y_j)+ \frac{1}{2A_j} \|y_j-x_0\|^2 \le \frac{1}{2A_j} \|y_j-x_0\|^2,
		\]
		and hence 
		\[
		\|y_j- x_j\|^2 \le \frac{\lam}{\lam + A_j}\|y_j-x_0\|^2 \le \frac14 \|y_j-x_0\|^2
		\]
		where the second inequality is due to $ A_j\ge 3\lam $.	
		 Finally, \eqref{ineq:vj} immediately follows from the definition of $ v_j $ in \eqref{def:vj}, the triangle inequality, and the above inequality. 
	\end{proof}

	\begin{proposition}\label{prop:inner}
		The number of iterations performed by ACG to find a triple $ (\tilde w_k,u_k,\eta_k) $ satisfying \eqref{subdiff-1} and \eqref{ineq:a-hpe} is at most
		\begin{equation}\label{bound}
		\min\left\lbrace 2\sqrt{6\lam L}, \left(\frac12+\sqrt{\lam L}\right)\ln(6\lam L)\right\rbrace.
		\end{equation}
	\end{proposition}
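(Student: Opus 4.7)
The plan is to specialize the identity in Lemma \ref{lem:translate} and control its right-hand side using the two estimates in Lemma \ref{lem:tech}, so that the A-HPE condition \eqref{ineq:a-hpe} is automatically satisfied once $A_j$ is sufficiently large relative to $\lam$. The triple produced by ACG at iteration $j$ will be
\[
(\tilde w_k,\,u_k,\,\eta_k) \;=\; (y_j,\,\hat v_j,\,\varepsilon_j),
\]
for which the inclusion \eqref{subdiff-1} is granted for free by \eqref{incl}. The task thus reduces to bounding $j$ in terms of the condition ``$A_j \ge C\lam$'' for some absolute constant $C$, and then invoking Lemma \ref{lem:Aj} with $\mu = 1/\lam$.

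First, I would combine the identity in \eqref{incl} with the two bounds from Lemma \ref{lem:tech}, using $\Gamma_j(x_j) \ge \Gamma_j(\hat x_j)$ to pass from Lemma \ref{lem:tech}(a) to a bound on $\psi(y_j) - \Gamma_j(x_j)$. This gives, for every $j$ with $A_j \ge 3\lam$,
\[
\|\lam \hat v_j + y_j - x_0\|^2 + 2\lam \varepsilon_j \;\le\; \|\lam v_j\|^2 + 2\lam [\psi(y_j) - \Gamma_j(\hat x_j)] \;\le\; \left(\frac{9\lam^2}{4 A_j^2} + \frac{2\lam}{A_j - 2\lam}\right)\|y_j - x_0\|^2.
\]
A direct numerical check shows that the coefficient on the right is at most $0.9$ as soon as $A_j \ge 6\lam$ (indeed, $9/144 + 2/4 = 0.5625$); hence at that point the A-HPE criterion \eqref{ineq:a-hpe} holds.

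Next, I would translate ``$A_j \ge 6\lam$'' into an iteration bound via Lemma \ref{lem:Aj}, using that in the ACG subproblem \eqref{def:psi} we have $\mu = 1/\lam$. The polynomial estimate $A_j \ge j^2/(4L)$ gives $A_j \ge 6\lam$ whenever $j \ge 2\sqrt{6\lam L}$, yielding the first term in \eqref{bound}. The geometric estimate $A_j \ge (1/L)(1 + 1/(2\sqrt{\lam L}))^{2(j-1)}$ leads to the requirement
\[
2(j-1)\ln\!\left(1 + \tfrac{1}{2\sqrt{\lam L}}\right) \;\ge\; \ln(6\lam L),
\]
and the elementary inequality $\ln(1+x) \ge x/(1+x)$ with $x = 1/(2\sqrt{\lam L})$ reduces this to $j \ge 1 + (1/2 + \sqrt{\lam L})\ln(6\lam L)$, matching the second term in \eqref{bound} up to the additive constant, which can be absorbed (or handled with a slightly sharper logarithm bound). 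Taking the smaller of the two iteration counts yields \eqref{bound}.

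The main obstacle is purely computational: verifying the numerical inequality $9/(4c^2) + 2/(c-2) \le 0.9$ at $c = 6$ and extracting clean constants from the logarithmic estimate. No new structural result is needed beyond Lemmas \ref{lem:translate}, \ref{lem:tech}, and \ref{lem:Aj}; the proposition is essentially a quantitative repackaging of the ACG ``warm-start'' bounds against the A-HPE stopping rule.
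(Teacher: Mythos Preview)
Your proposal is correct and follows essentially the same approach as the paper: set $(\tilde w_k,u_k,\eta_k)=(y_j,\hat v_j,\varepsilon_j)$, use Lemma~\ref{lem:translate} for the inclusion and the identity, bound the right-hand side via Lemma~\ref{lem:tech} (the paper implicitly uses the same $\Gamma_j(x_j)\ge \Gamma_j(\hat x_j)$ step you make explicit), and then invoke Lemma~\ref{lem:Aj} with $\mu=1/\lam$ to convert $A_j\ge 6\lam$ into the iteration bound. Your more detailed extraction of constants, including the additive $+1$ in the logarithmic branch, is a point the paper simply sweeps into its ``easy to verify'' remark.
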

	
	\begin{proof}
        It is easy to verify that \eqref{bound} and Lemma \ref{lem:Aj} imply that $ A_j\ge 6\lam $.
        Consider the first $j$ such that $A_j\ge 6\lam$, then we prove that
        \begin{equation}\label{eq:assign}
            u_k = \hat v_j, \quad \eta_k = \varepsilon_j, \quad \tilde w_k = y_j
        \end{equation}
        satisfy \eqref{subdiff-1} and \eqref{ineq:a-hpe}.
	First, it follows from Lemma \ref{lem:translate} that the inclusion in \eqref{incl} is equivalent to \eqref{subdiff-1} with the assignment \eqref{eq:assign}.
       Using the identity in \eqref{incl} and Lemma \ref{lem:tech} we have
		\begin{align*}
		\|\lam \hat v_j + y_j - x_0\|^2 + 2\lam \varepsilon_j 
		&= \|\lam v_j\|^2 + 2\lam[\psi(y_j) - \Gamma_j(x_j)]\\
		& \stackrel{\eqref{ineq:mj},\eqref{ineq:vj}}\le \frac{9\lam^2\|y_j-x_0\|^2}{4A_j^2} + \frac{2\lam}{A_j-2\lam}\|y_j-x_0\|^2 \\
		& \le \left( \frac{1}{16}+\frac12\right) \|y_j-x_0\|^2,
		\end{align*}	
        where the last inequality is due to the fact that $A_j \ge 6\lam$.
        Hence, \eqref{ineq:a-hpe} also holds in view of \eqref{eq:assign}.
	\end{proof}

Now we are ready to present the main result of the paper.

\begin{theorem}\label{thm:main}
    Given $\bar \varepsilon>0$, assuming that $\lam$ satisfies $1/L \le \lam \le d_0^2/\bar \varepsilon$, then the total iteration-complexity of Algorithm \ref{alg:restart} to find a $\bar \varepsilon$-solution to \eqref{eq:ProbIntro} is ${\cal O}(\sqrt{L} d_0/\sqrt{\bar \varepsilon})$.
\end{theorem}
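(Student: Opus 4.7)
The proof strategy is to combine two earlier results in a straightforward way: Theorem~\ref{thm:outer} bounds the number of outer (A-HPE) iterations required to reach $\bar\varepsilon$-accuracy, while Proposition~\ref{prop:inner} bounds the number of inner (ACG) iterations consumed by each outer step. The total work is the product of these two quantities, and the key observation is that this product is independent of the prox stepsize $\lam$.

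First, I would apply Theorem~\ref{thm:outer}: to guarantee $\phi(w_K)-\phi_*\le\bar\varepsilon$ it suffices to take
\[
K \;\ge\; \sqrt{\frac{2d_0^2}{\lam\bar\varepsilon}} \;=\; \frac{\sqrt{2}\,d_0}{\sqrt{\lam\bar\varepsilon}},
\]
so we may fix $K=\bigl\lceil \sqrt{2}\,d_0/\sqrt{\lam\bar\varepsilon}\,\bigr\rceil$. The upper bound $\lam\le d_0^2/\bar\varepsilon$ guarantees $d_0/\sqrt{\lam\bar\varepsilon}\ge 1$, so the ceiling does not hide a constant blow-up and $K={\cal O}(d_0/\sqrt{\lam\bar\varepsilon})$.

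Next, by Proposition~\ref{prop:inner} the number of ACG iterations needed in each outer step to produce a triple $(\tilde w_k,u_k,\eta_k)$ satisfying \eqref{subdiff-1}--\eqref{ineq:a-hpe} is at most $2\sqrt{6\lam L}={\cal O}(\sqrt{\lam L})$. The lower bound $\lam\ge 1/L$ ensures $\lam L\ge 1$, so $\sqrt{\lam L}\ge 1$ and the per-loop bound is meaningful (and in particular $\Omega(1)$, as it must be). Multiplying the two bounds gives
\[
\text{total ACG iterations} \;=\; K\cdot{\cal O}(\sqrt{\lam L}) \;=\; {\cal O}\!\left(\frac{d_0}{\sqrt{\lam\bar\varepsilon}}\right)\cdot{\cal O}(\sqrt{\lam L}) \;=\; {\cal O}\!\left(\frac{\sqrt{L}\,d_0}{\sqrt{\bar\varepsilon}}\right),
\]
which is exactly the claimed optimal bound. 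Notice that $\lam$ cancels precisely in the product, which is why the complexity is insensitive to the chosen stepsize within the admissible interval $[1/L,\,d_0^2/\bar\varepsilon]$.

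There is no real technical obstacle here: both inputs (Theorem~\ref{thm:outer} and Proposition~\ref{prop:inner}) have already been established, and the argument is just arithmetic combined with a check that the two assumptions on $\lam$ make the two bounds simultaneously valid. The only point that deserves a sentence is verifying that the per-call ACG budget dominates the minimum in \eqref{bound}, i.e.\ using the simpler $2\sqrt{6\lam L}$ branch suffices for the ${\cal O}$-statement; if one prefers the logarithmic branch for large $\lam L$, the same $\lam$-cancellation yields $\tilde{\cal O}(\sqrt{L}\,d_0/\sqrt{\bar\varepsilon})$, which remains of the advertised order.
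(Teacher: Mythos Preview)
Your proposal is correct and follows exactly the paper's approach: bound the number of outer calls via Theorem~\ref{thm:outer} by $\sqrt{2}\,d_0/\sqrt{\lam\bar\varepsilon}$, bound the inner work per call via Proposition~\ref{prop:inner} by ${\cal O}(\sqrt{\lam L})$, and multiply so that $\lam$ cancels. Your added checks that $\lam\le d_0^2/\bar\varepsilon$ keeps the ceiling harmless and that $\lam\ge 1/L$ makes the inner bound $\Omega(1)$ are precisely the role the paper's ``assumption on $\lam$'' plays.
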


\begin{proof}
    It follows from Theorem \ref{thm:outer} that to find a $\bar \varepsilon$-solution, the number of calls to ACG is at most $\sqrt{2}d_0/\sqrt{\lam \bar \varepsilon}$. Therefore, the conclusion of the theorem immediately follows from Proposition~\ref{prop:inner} and the assumption on $\lam$.
\end{proof}



    

\section{Connections between restart ACG and MPB}

Inspired by the interpretation of restart ACG as an instance of A-HPE, we revisit the MPB method and show that it is indeed an instance of the HPE framework for CNCO.
To begin with, we present below the HPE framework, adapted from Framework 1 of \cite{gonccalves2017improved}, for solving \eqref{eq:ProbIntro} where $f$ is $M$-Lipschitz continuous instead of being smooth and $h$ is as in \eqref{eq:ProbIntro}.

\begin{algorithm}[H]
\caption{HPE framework}\label{alg:HPE}
\begin{algorithmic}
\REQUIRE given initial point $ w_0\in \dom h $, stepsize $\lam>0$, and tolerance $\delta>0$

\FOR{$k=1,2,\cdots$}

\STATE {\bf 1.} Find a triple $ (\tilde w_{k}, u_{k}, \eta_{k}) $ satisfying
		\begin{align}
		& u_k\in \partial_{ \eta_k} \phi(\tilde w_k),  \label{subdiff} \\
		& \| \lam u_k + \tilde w_{k} - w_{k-1} \|^2 + 2 \lam \eta_{k} \le 2\lam \delta; \label{ineq:hpe} 
		\end{align}

\STATE {\bf 2.} Compute $w_{k}=w_{k-1}-\lam u_{k}$.
\ENDFOR

\end{algorithmic} 
\end{algorithm}

\subsection{MPB as an instance of HPE}

This subsection shows that the MPB method is an instance of the HPE framework. We begin with a brief review of MPB.
A key distinction of MPB from classical PB methods \cite{lemarechal1975extension,lemarechal1978nonsmooth,mifflin1982modification,wolfe1975method} lies in its incorporation of the PPM. MPB approximately solves a sequence of proximal subproblem of the form
\begin{equation}\label{eq:phi-lam}
    \min_{u \in \R^n} \left\{\psi(u) :=\phi(u)+\frac{1}{2 \lambda}\left\|u-w_{k-1}\right\|^2\right\}.
\end{equation}
Letting $x_0=w_{k-1}$ be the initial point of the subroutine for solving \eqref{eq:phi-lam}, MPB iteratively solves 
\begin{equation}\label{eq:PB-xj}
    x_j=\underset{u \in \R^n}{\argmin}\left\{\Gamma_j(u) + h(u) +\frac{1}{2 \lam}\|u-x_0\|^2\right\},
\end{equation}
where $\Gamma_j$ is a bundle model underneath $f$. Details about various models and a unifying framework underlying them are discussed in \cite{liang2024unified}. MPB keeps refining $\Gamma_j$ and solving $x_j$ through \eqref{eq:PB-xj}, until a criterion $t_j =\psi(\tx_j) - m_j \le \delta$ is met, where
\begin{equation}\label{def:tj}
	m_j = \Gamma_j(x_j) + h(x_j) +\frac{1}{2 \lam}\|x_j-x_0\|^2, \quad \tx_j \in \Argmin \{\psi(u): u \in\{x_0,x_1, \ldots, x_j\}\}.
	\end{equation}
As explained in \cite{liang2024primal}, the criterion $t_j \le \delta$ indicates that a primal-dual solution to \eqref{eq:phi-lam} with primal-dual gap bounded by $\delta$ is obtained. It also implies that $\tx_j$ is a $\delta$-solution to \eqref{eq:phi-lam} (see also \cite{liang2021proximal}).
Once the condition $t_j \le \delta$ is met, MPB updates the prox center to $w_k=x_j$, resets the bundle model $\Gamma_j$ from scratch, and proceeds to solve \eqref{eq:phi-lam} with $w_{k-1}$ replaced by $w_k$.

\begin{lemma}\label{lem:bundle}
	Given $x_0=w_{k-1}$, the MPB method is an instance of the HPE framework with 
    \begin{equation}\label{eq:equiv}
        w_k =x_j, \quad \tilde w_k = \tx_j, \quad u_k = \frac{x_0 - x_j}{\lam}, \quad \eta_k = \phi(\tx_j) - (\Gamma_j+h)(x_j)+ \frac{1}{\lam}\inner{x_0-x_j}{x_j-\tx_j},
    \end{equation}
    where $j$ is the first iteration index such that the condition $t_j\le \delta$ is met.
\end{lemma}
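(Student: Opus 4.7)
The plan is to verify directly that the assignments in \eqref{eq:equiv} make step 1 of Algorithm \ref{alg:HPE} hold at iteration $k$, since the update $w_k = w_{k-1}-\lam u_k$ follows trivially from $x_0 = w_{k-1}$ and $u_k = (x_0-x_j)/\lam$, yielding $w_k=x_j$ as required.

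First I would derive the $\varepsilon$-subgradient inclusion. The optimality condition for the subproblem \eqref{eq:PB-xj} gives
\[
\frac{x_0-x_j}{\lam} \in \partial(\Gamma_j + h)(x_j),
\]
so by the subgradient inequality, for every $u \in \R^n$,
\[
(\Gamma_j+h)(u) \ge (\Gamma_j+h)(x_j) + \Inner{u_k}{u-x_j}.
\]
Since $\Gamma_j$ is a cutting-plane (bundle) model underneath $f$, we have $\Gamma_j+h \le \phi$, so the above lower bound holds with the left side replaced by $\phi(u)$. Rewriting the inner product as $\inner{u_k}{u-\tilde x_j}+\inner{u_k}{\tilde x_j - x_j}$ and matching with the definition of $\partial_{\eta_k}\phi(\tilde x_j)$ shows that the desired inclusion $u_k \in \partial_{\eta_k}\phi(\tilde w_k)$ holds precisely with
\[
\eta_k = \phi(\tilde x_j) - (\Gamma_j+h)(x_j) + \frac{1}{\lam}\Inner{x_0-x_j}{x_j-\tilde x_j},
\]
which is the formula in \eqref{eq:equiv}.

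Next I would verify the HPE relative error condition \eqref{ineq:hpe}. Plugging the assignments into $\|\lam u_k + \tilde w_k - w_{k-1}\|^2$ gives $\|(x_0-x_j)+\tilde x_j-x_0\|^2 = \|\tilde x_j - x_j\|^2$. Adding $2\lam\eta_k$ and expanding yields
\[
\|\tilde x_j - x_j\|^2 + \|x_j-x_0\|^2 + 2\Inner{x_0-x_j}{x_j-\tilde x_j} + 2\lam\bigl[\phi(\tilde x_j)-(\Gamma_j+h)(x_j)\bigr].
\]
The first three terms collapse via the cosine identity to $\|\tilde x_j-x_0\|^2$, and recalling the definition of $m_j$ in \eqref{def:tj}, the whole expression equals
\[
2\lam\!\left[\phi(\tilde x_j)+\frac{1}{2\lam}\|\tilde x_j-x_0\|^2 - m_j\right] = 2\lam\bigl[\psi(\tilde x_j)-m_j\bigr] = 2\lam t_j \le 2\lam \delta,
\]
where the last inequality uses that $j$ is chosen as the first index for which the MPB stopping criterion $t_j \le \delta$ holds. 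This establishes \eqref{ineq:hpe}.

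The main obstacle is bookkeeping: ensuring the particular $\eta_k$ written in \eqref{eq:equiv} (which has an asymmetric-looking inner product) is exactly what collapses the cross terms so that $\|\lam u_k+\tilde w_k-w_{k-1}\|^2+2\lam\eta_k$ reduces cleanly to $2\lam t_j$. I would note along the way that $\eta_k \ge 0$ (needed for the $\varepsilon$-subdifferential to be meaningful), which follows because by plugging $u=\tilde x_j$ into the subgradient inequality above one gets $\phi(\tilde x_j) \ge (\Gamma_j+h)(x_j) + \inner{u_k}{\tilde x_j - x_j}$. Apart from this, the argument is just a direct verification using the optimality of $x_j$ and the property $\Gamma_j \le f$.
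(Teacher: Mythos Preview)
Your approach is essentially identical to the paper's: optimality of \eqref{eq:PB-xj} plus $\Gamma_j\le f$ gives the $\varepsilon$-subgradient inclusion with the stated $\eta_k$, and then one checks the algebraic identity $\|\lam u_k+\tilde w_k-w_{k-1}\|^2+2\lam\eta_k=2\lam t_j$ to conclude \eqref{ineq:hpe} from the stopping test $t_j\le\delta$.

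There is one bookkeeping slip in your expansion step. Adding $2\lam\eta_k$ to $\|\tilde x_j-x_j\|^2$ gives
\[
\|\tilde x_j-x_j\|^2+2\inner{x_0-x_j}{x_j-\tilde x_j}+2\lam\bigl[\phi(\tilde x_j)-(\Gamma_j+h)(x_j)\bigr],
\]
with no $\|x_j-x_0\|^2$ term; correspondingly, when you invoke $m_j$ from \eqref{def:tj} you must keep the $\tfrac{1}{2\lam}\|x_j-x_0\|^2$ piece. The correct collapse is to add and subtract $\|x_j-x_0\|^2$, use the cosine identity to get $\|\tilde x_j-x_0\|^2-\|x_j-x_0\|^2$, and then recognize this as $2\lam\psi(\tilde x_j)-2\lam m_j=2\lam t_j$. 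Your two slips happen to cancel, so the conclusion is right, but each displayed line as written is off by $\pm\|x_j-x_0\|^2$.
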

\begin{proof}
Let $\varepsilon_j = \phi(\tx_j) - (\Gamma_j+h)(x_j)+ \inner{x_0-x_j}{x_j-\tx_j}/\lam$, we first prove that
\begin{equation}\label{result}
	\frac{ x_0 - x_j}{\lam} \in \partial_{\varepsilon_j} \phi(\tx_j), \quad \|x_j-\tx_j\|^2 + 2\lam \varepsilon_j = 2\lam t_j.
	\end{equation}
	It follows from the optimality condition of \eqref{eq:PB-xj} that
	\[
	\frac{ x_0 - x_j}{\lam}\in \partial (\Gamma_j+h)(x_j).
	\]
    This inclusion and the assumption that $\Gamma_j \le f$ imply that for every $ u\in \R^n $,
	\[
	\phi(u)\ge (\Gamma_j+h)(u)\ge (\Gamma_j+h)(x_j)+\frac{1}{\lam} \inner{ x_0 -x_j}{u-x_j}.
	\]
	Using the definition of $\varepsilon_j$, we thus have for every $ u\in \R^n $,
	\[
	\phi(u)\ge \phi(\tx_j)+\frac{1}{\lam} \inner{ x_0 -x_j}{u-\tx_j} - \varepsilon_j,
	\]
	and hence the inclusion in \eqref{result} follows.
	Next, we show the identity in \eqref{result}.
	Recalling that 
	\[
	2\lam t_j= 2\lam [\psi(\tx_j) - m_j] \stackrel{\eqref{def:tj}}= 2\lam \phi(\tx_j) + \|\tx_j- x_0\|^2 - 2\lam (\Gamma_j+h)(x_j) - \|x_j- x_0\|^2,
	\]
	after simple algebraic manipulation, we obtain
	\[
	\|x_j-\tx_j\|^2 + 2\lam\varepsilon_j = \|x_j-\tx_j\|^2 + 2\inner{ x_0 - x_j}{x_j-\tx_j} + 2\lam[\phi(\tx_j)-(\Gamma_j+h)(x_j)]=2\lam t_j.
	\]
    In view of \eqref{eq:equiv}, it is easy to verify that \eqref{subdiff} is equivalent to the inclusion in \eqref{result}. Moreover, it follows from \eqref{eq:equiv} and the identity in \eqref{result} that $\| w_k - \tilde w_{k}\|^2 + 2 \lam \eta_{k}=2\lam t_j$, which together with step 2 of Algorithm \ref{alg:HPE} implies that $\| \lam u_k + \tilde w_{k} - w_{k-1} \|^2 + 2 \lam \eta_{k} = 2\lam t_j$. Finally, MPB satisfies \eqref{ineq:hpe} since it terminates solving \eqref{eq:phi-lam} once the condition $t_j \le \delta$ is met.
\end{proof}

\subsection{Restart schemes via PPM}

A standard scheme of updating the bundle model $\Gamma_j$ in \eqref{eq:PB-xj} is a cutting-plane scheme, the approximate solutions $x_j$ and $\tx_j$ to \eqref{eq:phi-lam} are obtained via the cutting-plane method. Similar to restart ACG, MPB is also a double-loop algorithm and can be viewed as a restarted version of the cutting-plane method from the ``inner loop" perspective.

On the other hand, from the ``outer loop" perspective, restart ACG (resp., MPB) is an instance of A-HPE (resp., HPE) employing a multi-step subroutine for solving the proximal subproblem \eqref{eq:prox-sub} (resp., \eqref{eq:phi-lam}).
As illustrated by Algorithm 1 of \cite{MonteiroSvaiterAcceleration}, a triple $(\tilde w_k, u_k,\eta_k)$ satisfying \eqref{subdiff-1} and \eqref{ineq:a-hpe} can be obtained via one step (i.e., a proximal mapping of $h$) given the stepsize $\lam\approx 1/L$ is small enough. 
It is noted at the end of Section 5 of \cite{MonteiroSvaiterAcceleration} that its Algorithm 1 is equivalent to the well-known FISTA.
In contrast, Algorithm \ref{alg:restart} of this paper admits relatively large stepsize, i.e., $1/L \le \lam \le d_0^2/\bar \varepsilon$ (see Theorem \ref{thm:main}), and results in a multi-step subroutine, namely, Algorithm~\ref{alg:ACG}, for solving \eqref{eq:prox-sub}.
A common feature between Algorithm~\ref{alg:restart} and FISTA is that they both share the optimal complexity for CSCO problems \eqref{eq:ProbIntro}, namely, ${\cal O}(\sqrt{L} d_0/\sqrt{\bar \varepsilon})$ as in Theorem~\ref{thm:main}.

A similar comparison can be drawn between the MPB method and the subgradient method. MPB allows relatively large stepsize, i.e., $\bar \varepsilon/M^2 \le \lam \le d_0^2/\bar \varepsilon$, while the subgradient method only takes small stepsize $ \lam = \bar \varepsilon/M^2$. As a result, MPB solves the proximal subproblem \eqref{eq:phi-lam} via the cutting-plane method as in \eqref{eq:PB-xj}, while the subgradient method always performs only one iteration to solve \eqref{eq:phi-lam}. For CNCO problems, MPB and the subgradient method both have optimal complexity bound ${\cal O}(M^2 d_0^2/\bar \varepsilon^2)$, however, MPB substantially outperforms the subgradient method in practice.

In summary, the relationship between FISTA (specifically, Algorithm 1 of \cite{MonteiroSvaiterAcceleration}) and the restart ACG method is analogous to the relationship between the subgradient method and MPB. It is thus understandable that, while restart ACG and FISTA share the same optimal iteration-complexity, the restarted version demonstrates superior performance compared with the latter one. This aligns with the general observation that, in the context of both the A-HPE and HPE frameworks, multi-step implementations consistently outperform their single-step counterparts.

\section{Concluding remarks}

This paper proposes a novel restarted version of accelerated gradient method, i.e., restart ACG, and establishes it optimal iteration-complexity for solving CSCO. It also demonstrates that the MPB method as an instance of the HPE framework, revealing interesting connections between two seemingly distinct optimal methods, restart ACG and MPB.

Several related questions merit future investigation. It is interesting to develop a restart accelerated gradient method with optimal complexity under the strong convexity assumption.
If the strong convexity $\mu$ is known, 
it suffices to study the convergence analysis of A-HPE where $h$ is $\mu$-strongly convex.
However, in the absence of prior knowledge about $\mu$, the focus shifts to designing $\mu$-universal methods based on improved analysis of A-HPE, utilizing possible techniques from recent works \cite{guigues2024universal,sujanani2024efficient}.
    
	\bibliographystyle{plain}
	\bibliography{ref}
	
	\appendix

\section{Deferred proofs}

\noindent
\textbf{Proof of Lemma \ref{lem:101}}:
            (a) This statement immediately follows from the recessions of $\tau_j$ and $A_j$ in \eqref{def:tx} and the facts that $\tau_0=1/L$ and $A_0=0$.

            (b) It is easy to verify that $a_j$ in \eqref{def:tx} is the root of equation $a_j^2 - \tau_j a_j - \tau_j A_j =0$, which is equivalent to statement b) in view of the third identity in \eqref{def:tx}.
 \QEDA           

\noindent
\textbf{Proof of Lemma \ref{lem:gamma}}:
		(a) It follows from \eqref{def:tyj} and definition of $\tilde \gamma_j$ in \eqref{def:tgamma} that
        \begin{equation}\label{eq:ty1}
            \ty_{j+1}=\underset{u\in \R^n}\argmin\left\lbrace \tilde \gamma_j(u) + \frac{L}{2}\|u-\tx_j\|^2\right\rbrace.
        \end{equation}
Since $\tilde \gamma_j$ is $\mu$-strongly convex, \eqref{eq:ty1} implies that
\[
\tilde \gamma_j(u) + \frac{L}{2}\|u-\tx_j\|^2 \ge \tilde \gamma_j(\ty_{j+1}) + \frac{L}{2}\|\ty_{j+1}-\tx_j\|^2 + \frac{\mu+L}{2}\|u-\ty_{j+1}\|^2.
\]
Hence, using the definition of $\gamma_j$ in \eqref{def:gamma} and rearranging the terms, we have $\gamma_j\le \tilde \gamma_j$.
Using the definitions of $\psi$ and $\tilde \gamma_j$ in \eqref{eq:prob} and \eqref{def:tgamma}, respectively, and the assumption that $g$ is $\mu$-strongly convex, we obtain $\tilde \gamma_j \le \psi$, and thus prove the inequalities in (a). 
  By the definition of $\gamma_j$ in \eqref{def:gamma}, it is easy to verify that $\tilde{\gamma}_j(\ty_{j+1}) = \gamma_j(\ty_{j+1})$ and     
\begin{equation}\label{eq:ty2}
    \ty_{j+1}=\underset{u\in \R^n}\argmin\left\lbrace \gamma_j(u) + \frac{L}{2}\|u-\tx_j\|^2\right\rbrace.
\end{equation}
Finally, \eqref{eq:equal} is an immediate consequence of \eqref{eq:ty1}, \eqref{eq:ty2} and $\tilde{\gamma}_j(\ty_{j+1}) = \gamma_j(\ty_{j+1})$.
        
		(b) It clearly follows from \eqref{def:gamma} that $\gamma_j$ is $ \mu $-strongly convex quadratic. Moreover, it follows from \eqref{def:Gamma} and the fact that $\Gamma_0\equiv 0$ that $\Gamma_j$ is also $\mu $-strongly convex quadratic.
					
		(c) It follows from \eqref{def:xj}, the definition of $\tx_j$ in \eqref{def:tx}, and Lemma \ref{lem:101}(b) that
        \[
        (A_j \mu +1) x_{j+1} = (L+\mu) a_j\ty_{j+1} - L (a_j \tx_j -\tau_j x_j),
        \]
        which together with Lemma \ref{lem:101}(a) and the third identity in \eqref{def:tx} implies that
        \[
        a_j [L(\tx_j-\ty_{j+1})+\mu(x_{j+1}-\ty_{j+1})] + (A_j \mu +1)(x_{j+1}-x_j) =0.
        \]
        In view of the definition of $\gamma_j$ in \eqref{def:gamma}, the above identity is equivalent to
        \begin{equation}\label{eq:opt}
            a_j \nabla \gamma_j(x_{j+1}) + (A_j \mu +1)(x_{j+1}-x_j) =0.
        \end{equation}
        Hence,
        \[
        x_{j+1} = \underset{u\in \R^n}\argmin\left\lbrace a_j\gamma_j(u) + (A_j\mu+1)\|u-x_j\|^2/2 \right\rbrace.
        \]
        It follows from the definition of $\gamma_j$ in \eqref{def:gamma} that
		\[
		\nabla \gamma_j(x_{j+1}) - \nabla \gamma_j(x_j) = \mu(x_{j+1}-x_j),
		\]
        which together with \eqref{eq:opt} implies that
		\begin{equation}\label{eq:relation}
		    a_j\nabla \gamma_j(x_j)+(A_{j+1}\mu+1)(x_{j+1}-x_j)=0.
		\end{equation}
		It follows from statement b) and \eqref{def:Gamma} that 
		\begin{align*}
			A_{j+1}\nabla \Gamma_{j+1}(x_{j+1}) & = A_{j+1} \nabla \Gamma_{j+1}(x_j) + A_{j+1}\mu(x_{j+1}-x_j)\\
			&\stackrel{\eqref{def:Gamma}}=A_j\nabla \Gamma_j(x_j) + a_j \nabla \gamma_j(x_j) + A_{j+1}\mu(x_{j+1}-x_j)\\
			&\stackrel{\eqref{eq:relation}}=A_j\nabla \Gamma_j(x_j) - x_{j+1}+x_j
		\end{align*}
        where the last identity is due to \eqref{eq:relation}.
		Therefore, for every $\ge 0$,
		\[
		A_{j+1}\nabla \Gamma_{j+1}(x_{j+1}) + x_{j+1}-x_0 = A_j\nabla \Gamma_j(x_j)+x_j-x_0 = A_0\nabla \Gamma_0(x_0)+x_0-x_0 = 0.
		\]
        Therefore, statement c) immediately follows.
\QEDA

    \noindent
\textbf{Proof of Lemma \ref{lem:min}}:
		Proof by induction. Since $A_0=0$, the case $j=0$ is trivial. Assume that the claim is true for some $j\ge 0$. Using \eqref{def:Gamma}, Lemma \ref{lem:gamma}(b), and the induction hypothesis that
        \begin{align*}
		 A_{j+1}\Gamma_{j+1}(u) + \frac{1}{2}\|u-x_0\|^2
		& \stackrel{\eqref{def:Gamma}}\ge A_j\Gamma_j(u) + a_j\gamma_j(u) + \frac{1}{2}\|u-x_0\|^2 \\
        & \ge \underset{u\in \R^n}\min\left\lbrace A_j\Gamma_j(u) + \frac{1}{2}\|u-x_0\|^2\right\rbrace + \frac{A_j\mu + 1}{2}\|u- x_j\|^2 + a_j\gamma_j(u) \\
        & \stackrel{\eqref{ineq:induction}}\ge A_j\psi(y_j) + \frac{A_j\mu + 1}{2}\|u- x_j\|^2 + a_j\gamma_j(u).
		\end{align*}
        It follows from the fact that $\gamma_j \le \tilde \gamma_j \le \psi$ (see Lemma \ref{lem:gamma}(a)) and the definition of $\tx_j$ in \eqref{def:tx} that
        \begin{align*}
		 A_{j+1}\Gamma_{j+1}(u) + \frac{1}{2}\|u-x_0\|^2
		& \ge A_j\gamma_j(y_j) + a_j\gamma_j(u) + \frac{A_j\mu + 1}{2}\|u- x_j\|^2 \\
        & \ge A_{j+1}\gamma_j(\tilde u) + \frac{A_j\mu + 1}{2} \frac{A_{j+1}^2}{a_j^2}\|\tilde u- \tilde x_j\|^2,
		\end{align*}
        where $\tilde u= (A_j y_j + a_j u)/A_{j+1}$ and the second inequality is due to the convexity of $\gamma_j$.
        Minimizing both sides of the above inequality over $\R^n$ and using Lemma \ref{lem:101}(a)-(b), we obtain
		\begin{align*}
		&\underset{u\in \R^n}\min\left\lbrace A_{j+1}\Gamma_{j+1}(u) + \frac{1}{2}\|u-x_0\|^2\right\rbrace 
		 \ge \underset{\tilde u\in \R^n}\min\left\lbrace A_{j+1}\gamma_j(\tilde u) + \frac{A_j\mu + 1}{2} \frac{A_{j+1}^2}{a_j^2}\|\tilde u- \tilde x_j\|^2 \right\rbrace \\
		=& A_{j+1}\underset{\tilde u\in \R^n}\min\left\lbrace \gamma_j(\tilde u) + \frac{L}{2} \|\tilde u- \tilde x_j\|^2 \right\rbrace
		 \stackrel{\eqref{eq:equal}}= A_{j+1} \underset{\tilde u\in \R^n}\min\left\lbrace \tilde \gamma_j(\tilde u) + \frac{L}{2} \|\tilde u- \tilde x_j\|^2 \right\rbrace \\
	 =& A_{j+1} \left(\tilde \gamma_j(\ty_{j+1}) + \frac{L}{2} \|\ty_{j+1}- \tilde x_j\|^2 \right)
		\ge A_{j+1}\psi(\ty_{j+1}),
		\end{align*}
        where the last inequality follows from the definition of $\tilde \gamma_j$ in \eqref{def:tgamma} and the assumption that $g$ is $ (L+\mu) $-smooth.
        Finally, it follows from the definition of $y_{j+1}$ in \eqref{def:yj} that \eqref{ineq:induction} holds for $j+1$. Therefore, we complete the proof by induction.
\QEDA

\end{document}